\documentclass[12pt]{article}
\usepackage[T1]{fontenc}
\usepackage{amssymb,bbm,amsfonts,mathrsfs,graphicx,amsmath,amsthm,xypic}
\usepackage{times,geometry}
\usepackage{mathtools,microtype,booktabs,array,tabularx}
\usepackage{xcolor,needspace,float}
\usepackage{hyperref}

\newtheorem{Th}{\scshape Theorem}[section]
\newtheorem{Lem}[Th]{\scshape Lemma}
\newtheorem{Cor}[Th]{\scshape Corollary}

\newtheorem{Prop}[Th]{\scshape Proposition}

\numberwithin{equation}{section}

\newcommand{\sig}{\sigma}
\newcommand{\cl}{\mathcal{G}_3}
\newcommand{\FQ}{\mathrm{FQ}_7}
\newcommand{\Odd}{O_4}
\DeclareMathOperator{\tr}{tr}
\DeclareMathOperator{\rank}{rank}

\hypersetup{
    pdftitle={Spectral bipartiteness in generalized odd graphs of diameter three},
    pdfsubject={Exact spectral extrema with a complete computational certificate},
    pdfkeywords={distance-regular graphs, odd girth, spectral bipartiteness, exact computation},
    colorlinks=true,
    linkcolor=blue!45!black,
    citecolor=blue!45!black,
    urlcolor=blue!45!black
}

\makeatletter
\renewcommand{\@maketitle}{
    \newpage
    \null
    \begin{center}
        \let\footnote\thanks
        {\LARGE \@title\par}
        \vskip 0.5em
        {\large
            \lineskip 0.5em
            \begin{tabular}[t]{c}
                \@author
            \end{tabular}\par
        }
        \vskip 0.5em
        {\large \@date\par}
    \end{center}
    \par
    \vskip 0.5em
}
\makeatother

\begin{document}
    \raggedbottom

    \title{Spectral bipartiteness in generalized odd graphs of diameter three}
    \author{
        Qi Zhou \thanks{Corresponding author: imzhouqi@126.com}
        \\ \small \it School of Finance and Mathematics, Huainan Normal University, Huainan, China
        }
    \date{}
    \maketitle

    \noindent {\bf Abstract:} For a graph $G$ of order $n$, put $\sigma(G)=(\lambda_1(G)+\lambda_n(G))/n$. We determine the first three largest values of this invariant among nonbipartite distance-regular graphs of diameter three and odd girth at least seven. The unique maximizer is the folded $7$-cube, with value $1/32$; the unique second maximizer is the Odd graph $O_4$, with value $1/35$; and the unique third maximizer is $C_7$, with value $2(1-\cos(\pi/7))/7$. More precisely, every other graph in the class satisfies $\sigma(G)<1/36$. This answers Problem~11 of Abiad, Taranchuk and van Veluw in \emph{Electronic Journal of Combinatorics} 33(2) (2026), P2.31. The proof combines established local multiplicity and odd-moment bounds: the condition $\sigma(G)\geq1/36$ forces the valency to be at most $182$. An exhaustive certificate using only integer and rational arithmetic then leaves three intersection arrays. The complete certificate is publicly available, and neither a classification of generalized odd graphs nor the $Q$-polynomial property is assumed. The odd-girth theorem gives the same extremal conclusions for connected $\{C_3,C_5\}$-free graphs with at most four distinct adjacency eigenvalues, without assuming regularity.

    \noindent {\bf Keywords:} distance-regular graph, generalized odd graph, extreme eigenvalues, spectral bipartiteness, eigenvalue multiplicity, exact computation

    \noindent {\bf AMS Subject Classifications (2020):} 05C50

    \section{Introduction and main results}
    All graphs in this paper are finite, undirected and simple. Write $\lambda_1(G)\geq\cdots\geq\lambda_n(G)$ for the adjacency eigenvalues of a graph $G$ of order $n$, and define
    \[
        \sig(G)=\frac{\lambda_1(G)+\lambda_n(G)}{n}.
    \]
    For a connected graph, $\sig(G)=0$ if and only if $G$ is bipartite. For a $k$-regular graph, $\lambda_1(G)=k$ and $\lambda_1(G)+\lambda_n(G)$ is also the least signless Laplacian eigenvalue. The normalization is essential when graphs of different orders are compared.

    These spectral questions are closely related to the classical problem of making a graph bipartite by deleting edges. Let $\tau_{\mathrm b}(G)$ be the minimum number of edges required, equivalently $|E(G)|-\operatorname{MaxCut}(G)$, where $\operatorname{MaxCut}(G)$ is the maximum cut size. Erd\H{o}s, Faudree, Pach and Spencer~\cite{EFPS1988} established bounds in terms of the order and size, including $\tau_{\mathrm b}(G)\leq(1/18-\varepsilon+o(1))n^2$ for triangle-free graphs, for some absolute $\varepsilon>0$. Their arguments use large induced bipartite subgraphs. Erd\H{o}s, Gy\H{o}ri and Simonovits~\cite{EGS1992} showed that a triangle-free $n$-vertex graph with at least $n^2/5$ edges is no harder to make bipartite than a suitable blow-up of $C_5$ on $n$ vertices with at least as many edges, and proved a related stability theorem.

    Balogh, Clemen and Lidick\'{y}~\cite{BCL2021} used flag algebras and structural arguments to prove $\tau_{\mathrm b}(G)\leq n^2/23.5$ for sufficiently large triangle-free graphs.

    Fallat and Fan~\cite{FF2012} bounded the least signless Laplacian eigenvalue $q_n(G)$ by the minimum vertex and edge deletions required for bipartiteness. De Lima, Nikiforov and Oliveira~\cite{DLNO2016} studied its maximization under clique exclusion.

    For triangle-free graphs, Balogh, Clemen, Lidick\'{y}, Norin and Volec~\cite{BCLNV2023} obtained $q_n(G)/n\leq15/94$ using Rayleigh quotients, induced-path counts and flag algebras; for regular graphs this bounds $\sig(G)$. Csikv\'{a}ri~\cite{Csi2022} identified the Higman--Sims graph as the unique maximizer of $\sig$ among triangle-free strongly regular graphs, with value $0.14$. This diameter-two result is a direct antecedent of our diameter-three classification, in which pentagons are also forbidden.

    For a $k$-regular graph, the elementary Rayleigh-quotient bound~\cite{DLNO2016} gives a direct connection between the deletion parameter and the present spectral invariant:
    \[
        \frac{\sig(G)}4=\frac{k+\lambda_n(G)}{4n}
        \leq\frac{\tau_{\mathrm b}(G)}{n^2}.
    \]
    Indeed, a sign vector $s\in\{-1,1\}^n$ representing a maximum cut satisfies $s^{\mathsf T}(kI+A)s=4\tau_{\mathrm b}(G)$ and $s^{\mathsf T}s=n$. Thus the present theorem identifies the graphs for which this normalized spectral lower bound is largest within the distance-regular class of diameter three and odd girth seven, and determines its next two values. This gives an exact spectral counterpart to the classical study of how forbidden odd cycles constrain the distance from bipartiteness.

    Abiad, Taranchuk and van Veluw~\cite{ATV2026} proved $\sig(G)<0.0396$ for all $\{C_3,C_5\}$-free graphs, using weighted eigenvalue interlacing. Their Problem~11, on page~11 of the published article, asks whether the folded $7$-cube maximizes $\sig$ among $\{C_3,C_5\}$-free distance-regular graphs of diameter three. Yip~\cite{Yip2026} subsequently obtained an $O(g^{-3}(\log g)^3)$ bound as the odd-girth lower bound $g$ tends to infinity, using approximation theory. That asymptotic result does not identify the extremizer in the fixed diameter-three problem.

    A nonbipartite distance-regular graph of diameter $D$ and odd girth $2D+1$ is called a \emph{generalized odd graph}. Let $\cl$ denote the class of nonbipartite distance-regular graphs of diameter three having no triangles or pentagons. These graphs have odd girth exactly seven; see Section~\ref{sec:prelim}. Let $\FQ$ denote the folded $7$-cube, and let $\Odd=KG(7,3)$ denote the Odd graph whose vertices are the $3$-subsets of a $7$-set, with disjointness as adjacency.

    The contribution is an extremal classification obtained by combining established spectral tools with an explicit finite reduction. The local multiplicity bound comes from Terwilliger\textquotesingle s tree bound~\cite{Ter1982} and the neighborhood Gram-matrix argument of Juri\v{s}i\'{c}, Koolen and Miklavi\v{c}~\cite{JKM2005}; the odd-moment inequality is the regular-graph specialization of Abiad, Taranchuk and van Veluw~\cite{ATV2026}. Their combination with the normalized objective gives the cutoff $k\leq182$, after which exact arithmetic excludes all competing arrays. Finite valency reductions also occur in Qiao and Koolen~\cite{QK2018}. The present exclusion assumes neither a classification of generalized odd graphs nor the $Q$-polynomial property considered by Lang and Terwilliger~\cite{LT2007}.

    Bang and Koolen~\cite{BK2017} studied distance-regular graphs of diameter three with eigenvalue $-1$, obtaining finiteness and classification results under additional parameter restrictions. This eigenvalue occurs in $\Odd$ and $\FQ$, but not in $C_7$. Their work provides further structural context for the equality cases; our exclusion tests do not assume the presence of $-1$.

    \begin{Th}\label{thm:threshold}
        For $G\in\cl$,
        \[
            \sig(G)\geq\frac1{36}
            \quad\Longleftrightarrow\quad
            G\cong C_7,\ \Odd,\ \text{or }\FQ.
        \]
    \end{Th}

    \begin{Cor}\label{cor:extrema}
        For $G\in\cl$, the following statements hold.
        \begin{enumerate}
            \item $\sig(G)\leq1/32$, with equality if and only if $G\cong\FQ$.
            \item If $G\not\cong\FQ$, then $\sig(G)\leq1/35$, with equality if and only if $G\cong\Odd$.
            \item If $G$ is isomorphic to neither $\FQ$ nor $\Odd$, then
            \[
                \sig(G)\leq\frac{2(1-\cos(\pi/7))}{7},
            \]
            with equality if and only if $G\cong C_7$.
        \end{enumerate}
        Every graph outside these three isomorphism classes satisfies $\sig(G)<1/36$.
    \end{Cor}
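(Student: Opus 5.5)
The corollary follows from Theorem~\ref{thm:threshold} once $\sig$ is computed exactly for the three graphs it names. By that theorem, every $G\in\cl$ outside the isomorphism classes of $C_7$, $\Odd$ and $\FQ$ satisfies $\sig(G)<1/36$. This is the final sentence of the corollary. It also means that any bound of the form $\sig(G)\le c$ with $c\geq 1/36$ needs checking only on these three graphs, with strict inequality automatic for all other graphs in $\cl$. So the plan is to compute the three values, order them, and confirm that each exceeds $1/36$. Uniqueness in each item then follows because the three values are pairwise distinct.

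For the computations I will use the standard spectra.

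\emph{Folded $7$-cube.} It has $n=2^6=64$ vertices and is $7$-regular. Its eigenvalues are $7-2i$ for even $i$, namely $7,3,-1,-5$. Hence $\lambda_n=-5$ and $\sig(\FQ)=(7-5)/64=1/32$.

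\emph{Odd graph $\Odd=KG(7,3)$.} It has $n=\binom73=35$ vertices and is $4$-regular. Its eigenvalues are $(-1)^i\binom{4-i}{3-i}$, that is, $4,-3,2,-1$. Hence $\lambda_n=-3$ and $\sig(\Odd)=(4-3)/35=1/35$.

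\emph{Cycle $C_7$.} We have $\lambda_1=2$ and $\lambda_n=2\cos(6\pi/7)=-2\cos(\pi/7)$. Hence $\sig(C_7)=2(1-\cos(\pi/7))/7$.

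Next comes the ordering $1/32>1/35>2(1-\cos(\pi/7))/7\geq 1/36$, with the last inequality in fact strict. The first inequality is immediate. For the others I need bounds on $\cos(\pi/7)$. The alternating Taylor series with error bounds gives
\[
1-\frac{x^2}{2}\le\cos x\le 1-\frac{x^2}{2}+\frac{x^4}{24},\qquad x=\pi/7 .
\]
This yields $1-\cos(\pi/7)\approx0.0990$, so $\sig(C_7)\approx0.02829$. Compare this with $1/35\approx0.02857$ and $1/36\approx0.02778$. Concretely, the two needed inequalities are:
\begin{itemize}
\item $1-\cos(\pi/7)<1/10$, which follows from $\pi^2/98<1/5$ using $\pi<3.15$;
\item $1-\cos(\pi/7)>7/72\approx0.09722$, which follows from $x^2/2-x^4/24>0.0975$ using $\pi>3.14$.
\end{itemize}
These give $1/35>\sig(C_7)>1/36$. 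I will carry this out with rational bounds on $\pi$ so that the argument stays exact, in the spirit of the paper.

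Finally, assemble the items. For item~1, any $G$ other than the three graphs has $\sig(G)<1/36<1/32$, and the other two special graphs have smaller values, so equality holds only for $\FQ$. Items~2 and~3 follow the same way after excluding $\FQ$, and then $\Odd$. The only step needing care is the numerical comparison of $\sig(C_7)$ against $1/35$ and $1/36$, since the gaps are small, about $3\times10^{-4}$ and $5\times10^{-4}$. The Taylor bounds above are enough for this.
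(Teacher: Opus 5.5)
Your reduction to Theorem~\ref{thm:threshold} and your three spectra are correct, and this is the paper's structure. The problem is the one step you flagged as delicate, namely $\sig(C_7)<1/35$, i.e.\ $1-\cos(\pi/7)<1/10$. Your Taylor inequalities give only one upper bound on $1-\cos x$, namely $1-\cos x\le x^2/2$. At $x=\pi/7$ this equals $\pi^2/98\approx0.10071$, which is \emph{larger} than $1/10$ because $\pi^2\approx9.87>9.8$. With $\pi<3.15$ you only get $\pi^2/98<0.1013$. The inequality $\pi^2/98<1/5$ that you wrote is true, but it only gives $1-\cos(\pi/7)<1/5$, i.e.\ $\sig(C_7)<2/35$. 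That does not separate $C_7$ from $\Odd$.

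No sharper rational bound on $\pi$ can rescue this route. The true margin $1/10-(1-\cos(\pi/7))\approx9.7\times10^{-4}$ is smaller than the discarded quartic term $x^4/24\approx1.7\times10^{-3}$. So your claim that the Taylor bounds above suffice is false. As written, you have not shown $\sig(C_7)<\sig(\Odd)$, so items~2 and~3, which say $\Odd$ rather than $C_7$ is the second maximizer, are unproved. Your other bullet, $1-\cos(\pi/7)\ge x^2/2-x^4/24>7/72$ using $\pi>3.14$, is fine. It is in any case implied by the converse direction of Theorem~\ref{thm:threshold}.

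To repair the step, go one term further and use $\cos x\ge 1-x^2/2+x^4/24-x^6/720$. The resulting upper bound $x^2/2-x^4/24+x^6/720$ on $1-\cos x$ is increasing on $(0,1)$. So $\pi<3.1416$ gives $1-\cos(\pi/7)<0.09904<1/10$.

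The paper avoids $\pi$ entirely. It notes that $\lambda_n(C_7)=2\cos(6\pi/7)$ is the least root $\alpha$ of $p(z)=z^3+z^2-2z-1$. It then uses the exact values $p(-65/36)=-701/46656<0<1/125=p(-9/5)$, together with $p'(z)\ge 103/25>0$ for $z\le -9/5$. These give $-65/36<\alpha<-9/5$, which is precisely $1/36<(2+\alpha)/7<1/35$. This is purely rational arithmetic, which is the exactness you said you wanted.
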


    The three values are
    \[
        0.03125,\qquad 0.028571428571\ldots,\qquad
        0.028294609170\ldots.
    \]
    The threshold $1/36$ is a convenient rational separator below $\sig(C_7)$; the theorem does not determine the fourth-largest value. The extremal value is separated from the rest of $\cl$ by $1/32-1/35=3/1120$. Bipartite graphs have $\sig=0$, so the first conclusion also answers the published problem \cite{ATV2026} in its full stated class.

    \section{Graph parameters and bounds}\label{sec:tools}
    \subsection{Graph parameters and eigenvalues}\label{sec:prelim}
    For a vertex $v$ of a connected graph, let $\Gamma_i(v)$ be the set of vertices at distance $i$ from $v$. In a distance-regular graph, a vertex $w\in\Gamma_i(v)$ has $c_i$, $a_i$, and $b_i$ neighbors in $\Gamma_{i-1}(v)$, $\Gamma_i(v)$, and $\Gamma_{i+1}(v)$, respectively, where these numbers depend only on $i$. The valency is $k=b_0$, and $a_i+b_i+c_i=k$. We use the standard theory of intersection matrices and primitive idempotents; see~\cite{BCN1989}.

    For $G\in\cl$, the absence of triangles and pentagons gives $a_1=a_2=0$. Indeed, an edge within a distance layer of index at most two creates an odd closed walk of length at most five and therefore an odd cycle of length at most five. Also $a_3>0$: if every $a_i$ vanished, the even and odd distance layers would give a bipartition. Consequently, with $\mu=c_2$ and $c=c_3$, the intersection array is
    \begin{equation}\label{eq:array}
        \{k,k-1,k-\mu;1,\mu,c\},\qquad
        k\geq2,\quad 1\leq\mu\leq c<k.
    \end{equation}
    Here we used the standard monotonicity $c_2\leq c_3$. An edge in the third distance layer produces an odd closed walk of length seven. Since shorter odd cycles are absent, the odd girth is seven.

    Write $k_i=|\Gamma_i(v)|$. Edge counting between consecutive layers gives
    \begin{equation}\label{eq:layers}
        k_0=1,\quad k_1=k,\quad
        k_2=\frac{k(k-1)}{\mu},\quad
        k_3=\frac{k(k-1)(k-\mu)}{\mu c},\quad
        n=1+k+k_2+k_3.
    \end{equation}
    In particular, $k_2$ and $k_3$ are integers. The intersection matrix is
    \begin{equation}\label{eq:L}
        L=\begin{pmatrix}
            0&k&0&0\\
            1&0&k-1&0\\
            0&\mu&0&k-\mu\\
            0&0&c&k-c
        \end{pmatrix}.
    \end{equation}
    It is similar, via $\operatorname{diag}(\sqrt{k_0},\ldots,\sqrt{k_3})$, to a symmetric tridiagonal matrix $B$ with positive off-diagonal entries. Its four simple eigenvalues are precisely the distinct eigenvalues of $G$. The row sums of $L$ are $k$, and $k-c>0$, so all its other eigenvalues lie strictly between $-k$ and $k$.

    The following established bound is a consequence of Terwilliger\textquotesingle s tree bound~\cite{Ter1982}. For $k\geq3$, it and the neighborhood Gram-matrix argument appear in Juri\v{s}i\'{c}, Koolen and Miklavi\v{c}~\cite{JKM2005}, specifically Lemma~7(i)--(ii) on pages~6--7 of their 2004 preprint. We retain the short proof for completeness; it also covers $k=2$ under the stated nonzero-eigenvalue hypothesis.

    \begin{Lem}\label{lem:rank}
        Let $G$ be a triangle-free distance-regular graph of valency $k\geq2$. If $\theta$ is an eigenvalue with $0<|\theta|<k$, then its multiplicity $m(\theta)$ satisfies $m(\theta)\geq k$.
    \end{Lem}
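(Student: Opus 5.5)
We argue with the primitive idempotent $E$ of $\theta$ and the Gram matrix of the projected vectors on a closed neighborhood. Write $E=\frac{m}{n}\sum_{i}\omega_i A_i$, where $m=m(\theta)$, $A_i$ is the distance-$i$ matrix, and $(\omega_0,\omega_1,\omega_2,\ldots)$ is the standard sequence of $\theta$. Thus $\omega_0=1$, $\omega_1=\theta/k$, and, from $c_1\omega_0+a_1\omega_1+b_1\omega_2=\theta\omega_1$ with $a_1=0$ (triangle-free), $\omega_2=(\theta^2-k)/(k(k-1))$ when $k\geq2$. The columns $u_x=Ee_x$ span $\operatorname{col}(E)$, a space of dimension $m$, and $\langle u_x,u_y\rangle=\frac{m}{n}\omega_{d(x,y)}$. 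It therefore suffices to exhibit $k$ vertices whose projected vectors are linearly independent, since then $m\geq k$.

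Fix a vertex $v$ and take the $k$ neighbours $x_1,\dots,x_k$ of $v$. Because $G$ is triangle-free, any two distinct neighbours are at distance exactly $2$. Hence the Gram matrix of $u_{x_1},\dots,u_{x_k}$ is
\[
\frac{m}{n}\bigl((1-\omega_2)I+\omega_2 J\bigr),
\]
with eigenvalues $\frac{m}{n}(1-\omega_2)$ (multiplicity $k-1$) and $\frac{m}{n}(1+(k-1)\omega_2)$ (multiplicity $1$). We compute
\[
1+(k-1)\omega_2=1+\frac{\theta^2-k}{k}=\frac{\theta^2}{k},
\]
which is nonzero exactly because $\theta\neq0$. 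We also compute
\[
1-\omega_2=\frac{k(k-1)-\theta^2+k}{k(k-1)}=\frac{k^2-\theta^2}{k(k-1)},
\]
which is positive because $|\theta|<k$. So the Gram matrix is nonsingular, the $k$ vectors are independent, and $m(\theta)=\operatorname{rank}E\geq k$.

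The only delicate point is confirming the formula for $\omega_2$ via the recurrence, which needs $b_1=k-1>0$, i.e.\ $k\geq2$; this is exactly where the hypothesis $k\geq2$ is used. No separate treatment of $k=2$ is needed, since the argument is uniform once $\theta\neq0$. For $k=2$ the graph is a cycle, and the bound reflects that its nonzero eigenvalues with $|\theta|<2$ have multiplicity $2$. The case $\theta=0$ genuinely fails, since then the all-ones direction collapses, which explains the hypothesis $\theta\neq 0$.
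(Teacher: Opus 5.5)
Your proof is correct and is essentially the paper's argument verbatim: the principal (Gram) submatrix of $E$ on the $k$ neighbours of a vertex is $\frac{m}{n}\bigl((1-\omega_2)I+\omega_2J\bigr)$. Its eigenvalues $\frac{k^2-\theta^2}{k(k-1)}$ and $\frac{\theta^2}{k}$ are positive, so $m(\theta)=\operatorname{rank}E\geq k$. One small correction to your closing remark: for $\theta=0$ only the neighbours-only argument breaks down, not the conclusion, because $u_v$ is nonzero and orthogonal to every $u_{x_i}$ (as $\omega_1=0$), so $u_v,u_{x_1},\dots,u_{x_k}$ still span a space of dimension $1+(k-1)=k$.
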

    \begin{proof}
        Let $E$ be the orthogonal projection onto the $\theta$-eigenspace. The distance-regularity of $G$ gives
        \[
            E_{vw}=\frac{m(\theta)}n u_i
            \quad\text{when }\operatorname{dist}(v,w)=i,
            \qquad u_0=1,\quad u_1=\frac\theta k.
        \]
        The standard-sequence recurrence at layer one, where $a_1=0$, gives
        \[
            u_2=\frac{\theta^2-k}{k(k-1)}.
        \]
        The $k$ neighbors of a fixed vertex are mutually at distance two. The corresponding principal submatrix of $E$ is therefore
        \[
            \frac{m(\theta)}n\bigl((1-u_2)I_k+u_2J_k\bigr).
        \]
        Its eigenvalues, after omitting the positive factor $m(\theta)/n$, are
        \[
            1-u_2=\frac{k^2-\theta^2}{k(k-1)}>0
            \quad(k-1\text{ times}),\qquad
            1+(k-1)u_2=\frac{\theta^2}{k}>0.
        \]
        Thus this principal submatrix has rank $k$, while $\rank(E)=m(\theta)$.
    \end{proof}

    \subsection{Bounds for the search}\label{sec:reduction}
    In this subsection the diameter need not be three. For a connected nonbipartite $k$-regular graph, put
    \[
        b=-\lambda_n(G),\qquad x=\frac bk\in(0,1),\qquad
        t=\frac nk.
    \]

    The first inequality below is the specialization of~\cite[Lemma~6]{ATV2026} to a regular graph. Indeed, substituting $\lambda_1=k$ and $\lambda_n=-b$ in that result gives
    \[
        k\leq\frac{b^{2\ell-1}n}{k^{2\ell-1}+b^{2\ell-1}}
        \quad\Longleftrightarrow\quad x^{2\ell-1}(t-1)\geq1.
    \]
    We include its moment proof to make the combination with the multiplicity constraint explicit.

    \begin{Lem}\label{lem:moments}
        Let $G$ be a nonbipartite triangle-free distance-regular graph of valency $k\geq2$. If its odd girth is at least $2\ell+3$, where $\ell\geq1$, then
        \begin{equation}\label{eq:basicbounds}
            x^{2\ell-1}(t-1)\geq1,\qquad
            k\leq\frac{t-1}{x^2}.
        \end{equation}
        In particular,
        \begin{equation}\label{eq:ordergrowth}
            n\geq k+k^{4\ell/(2\ell+1)}.
        \end{equation}
    \end{Lem}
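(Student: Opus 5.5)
The plan is to derive both inequalities in \eqref{eq:basicbounds} from traces of powers of the adjacency matrix $A$, and then eliminate $x$ to obtain \eqref{eq:ordergrowth}. Throughout, $b=-\lambda_n(G)>0$. Since $G$ is connected and nonbipartite, $-k$ is not an eigenvalue, so $0<b<k$ and $x\in(0,1)$.

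\emph{First inequality.} Since the odd girth is at least $2\ell+3$, $G$ has no closed walk of odd length $2\ell+1$: such a walk would contain an odd cycle of length at most $2\ell+1$. Hence $\tr A^{2\ell+1}=0$, that is,
\[
    k^{2\ell+1}+\sum_{\theta}\theta^{2\ell+1}=0,
\]
where the sum runs over the $n-1$ eigenvalues other than $\lambda_1=k$, counted with multiplicity. Positive terms are nonnegative. A negative eigenvalue $\theta\geq-b$ satisfies $\theta^{2\ell+1}\geq-b^{2\ell-1}\theta^2$. The sum of $\theta^2$ over all non-principal eigenvalues is $\tr A^2-k^2=nk-k^2$. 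Therefore
\[
    0\geq k^{2\ell+1}-b^{2\ell-1}(nk-k^2),
    \qquad\text{so}\qquad
    k^{2\ell}\leq b^{2\ell-1}(n-k).
\]
Dividing by $k^{2\ell}$ gives $x^{2\ell-1}(t-1)\geq1$.

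\emph{Second inequality.} Here the multiplicity bound enters. Since $0<b<k$ and $G$ is triangle-free (automatic, as $\ell\geq1$ forces odd girth at least $5$), Lemma~\ref{lem:rank} gives $m(-b)\geq k$. Using $\tr A^2=nk$,
\[
    k^2+k\,b^2\leq k^2+m(-b)\,b^2\leq nk,
\]
so $b^2\leq n-k$. Dividing by $k^2$ gives $x^2k\leq t-1$, which is $k\leq(t-1)/x^2$.

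\emph{Order growth.} The first inequality gives $x\geq(t-1)^{-1/(2\ell-1)}$. Substituting into the second yields
\[
    k\leq(t-1)^{1+2/(2\ell-1)}=(t-1)^{(2\ell+1)/(2\ell-1)}.
\]
Hence $t-1\geq k^{(2\ell-1)/(2\ell+1)}$. Multiplying by $k$ gives $n-k\geq k^{4\ell/(2\ell+1)}$, which is \eqref{eq:ordergrowth}.

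The only delicate step is the sign-splitting bound in the first inequality. One must check that discarding the positive eigenvalues and bounding $|\theta|^{2\ell-1}\leq b^{2\ell-1}$ on the negative ones both go in the right direction. Everything else is bookkeeping.
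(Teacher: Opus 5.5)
Your proof is correct and follows the paper's argument. The first inequality comes from $\tr(A^{2\ell+1})=0$ together with $|\theta|^{2\ell-1}\leq b^{2\ell-1}$ on the negative eigenvalues, and the second from Lemma~\ref{lem:rank} applied to $-b$ together with $\tr(A^2)=nk$. The only cosmetic difference is in deriving \eqref{eq:ordergrowth}: you eliminate $x$ by substitution, whereas the paper splits into cases according to whether $b\geq k^{2\ell/(2\ell+1)}$, and the two routes are equivalent.
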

    \begin{proof}
        The absence of odd closed walks of length $2\ell+1$ gives $\tr(A^{2\ell+1})=0$. Thus
        \[
            k^{2\ell+1}
            \leq\sum_{\lambda_i<0}(-\lambda_i)^{2\ell+1}
            \leq b^{2\ell-1}\sum_{\lambda_i<0}\lambda_i^2
            \leq b^{2\ell-1}(nk-k^2).
        \]
        Dividing by $k^{2\ell+1}$ proves the first inequality in \eqref{eq:basicbounds}. If $m$ is the multiplicity of $-b$, Lemma~\ref{lem:rank} and $\tr(A^2)=nk$ give
        \[
            k b^2\leq m b^2\leq nk-k^2,
        \]
        which proves the second inequality. Equivalently,
        \[
            n-k\geq b^2,\qquad
            n-k\geq\frac{k^{2\ell}}{b^{2\ell-1}}.
        \]
        If $b\geq k^{2\ell/(2\ell+1)}$, the first bound proves \eqref{eq:ordergrowth}; otherwise the second bound proves it.
    \end{proof}

    \begin{Prop}\label{prop:finite}
        Under the hypotheses of Lemma~\ref{lem:moments}, suppose $\sig(G)\geq1/M$, where $M>1$. Then
        \begin{equation}\label{eq:Mconstraints}
            t\leq M(1-x),\qquad
            x^{2\ell-1}(M-1-Mx)\geq1,\qquad
            k\leq\frac{M-1-Mx}{x^2}.
        \end{equation}
        In particular,
        \begin{equation}\label{eq:coarsefinite}
            k<(M-1)^{(2\ell+1)/(2\ell-1)},\qquad
            n<M(M-1)^{(2\ell+1)/(2\ell-1)}.
        \end{equation}
    \end{Prop}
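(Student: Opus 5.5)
We first rewrite the hypothesis $\sig(G)\geq1/M$ in the variables $x$ and $t$. Since $G$ is $k$-regular, $\lambda_1=k$ and $\lambda_n=-b=-xk$, so
\[
\sig(G)=\frac{k(1-x)}{n}=\frac{1-x}{t}.
\]
Hence $\sig(G)\geq1/M$ is equivalent to $t\leq M(1-x)$, which is the first inequality of \eqref{eq:Mconstraints}. Note that $x\in(0,1)$ because $G$ is connected and nonbipartite, so $1-x>0$. Next we substitute into Lemma~\ref{lem:moments}. From the first inequality there, $1\leq x^{2\ell-1}(t-1)\leq x^{2\ell-1}(M(1-x)-1)=x^{2\ell-1}(M-1-Mx)$. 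From the second, $k\leq(t-1)/x^2\leq(M-1-Mx)/x^2$. Both use only that $t-1$ is increasing in $t$ and that $x^{2\ell-1}$ and $x^{-2}$ are positive. This proves all three constraints in \eqref{eq:Mconstraints}.

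For \eqref{eq:coarsefinite}, we eliminate $x$. Since $M-1-Mx<M-1$, the second constraint gives $x^{2\ell-1}(M-1)>1$, that is, $x>(M-1)^{-1/(2\ell-1)}$. Positivity of $M-1-Mx$ is forced by the same constraint, so $x<(M-1)/M$. Plugging the lower bound on $x$ into the third constraint gives
\[
k<\frac{M-1}{x^2}<(M-1)\cdot(M-1)^{2/(2\ell-1)}=(M-1)^{(2\ell+1)/(2\ell-1)}.
\]
The inequality is strict because $M-1-Mx<M-1$. For $n$, we use $n=tk\leq M(1-x)k<Mk$, and the bound on $k$ then gives $n<M(M-1)^{(2\ell+1)/(2\ell-1)}$.

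No step is a real obstacle. The only care needed is with the direction of the inequalities: we substitute the upper bound on $t$ into expressions increasing in $t$, and the lower bound on $x$ into $x^{-2}$, which is decreasing in $x$. We must also check strictness, which follows from $x>0$ giving $Mx>0$.
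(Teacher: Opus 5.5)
Your proof is correct and follows the paper's argument. You rewrite $\sigma(G)=(1-x)/t$ to get $t\le M(1-x)$, substitute this into both inequalities of Lemma~\ref{lem:moments}, and then use the strict bound $t-1\le M-1-Mx<M-1$ to obtain $x>(M-1)^{-1/(2\ell-1)}$, which gives the bound on $k$ and then on $n=tk<Mk$; the paper phrases the strictness as $t<M$ rather than $M-1-Mx<M-1$, but this is the same observation.
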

    \begin{proof}
        Since $\sig(G)=(1-x)/t$, the threshold gives the first inequality in~\eqref{eq:Mconstraints}. The other two follow from \eqref{eq:basicbounds}. Also $t<M$, so $x>(M-1)^{-1/(2\ell-1)}$ and $k\leq(t-1)/x^2<(M-1)^{(2\ell+1)/(2\ell-1)}$. Multiplication by $t<M$ gives the order bound.
    \end{proof}

    Thus, for every $\varepsilon>0$, only finitely many triangle-free distance-regular graphs have $\sig(G)\geq\varepsilon$, up to isomorphism. This follows by taking $\ell=1$ and a sufficiently large $M$ with $1/M\leq\varepsilon$, and then bounding the order. In particular, the positive values of $\sig$ in this class have no positive accumulation point. This finiteness statement does not require a diameter bound.

    \begin{Prop}\label{prop:182}
        If $G\in\cl$ and $\sig(G)\geq1/36$, then
        \begin{equation}\label{eq:182}
            k\leq182,\qquad 5n<117k.
        \end{equation}
        If the stronger condition $\sig(G)\geq1/35$ holds, then $k\leq165$ and $5n<112k$.
    \end{Prop}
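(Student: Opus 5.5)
The plan is to apply Proposition~\ref{prop:finite} with $\ell=2$ and then make the resulting one-variable optimization explicit. For $G\in\cl$ the odd girth is seven, so the hypothesis ``odd girth at least $2\ell+3$'' holds with $\ell=2$. Lemma~\ref{lem:moments} applies because $G$ is nonbipartite and triangle-free with $k\geq2$. Write $g_M(x)=x^3(M-1-Mx)$ and $f_M(x)=(M-1-Mx)/x^2$. Then \eqref{eq:Mconstraints} reads
\[
t\leq M(1-x),\qquad g_M(x)\geq1,\qquad k\leq f_M(x).
\]
The first constraint gives $t$, i.e.\ $n/k$, and the third bounds $k$. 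Both are decreasing in $x$, so it suffices to find a rational lower bound $r$ for $x$.

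First I locate the feasible set of $x$. Since $t\geq1$ we have $x\leq1-1/M$, and $g_M(1-1/M)=0<1$. Also $g_M'(x)=x^2\bigl(3(M-1)-4Mx\bigr)$, so $g_M$ increases on $(0,3(M-1)/(4M))$ and decreases afterwards. Hence $\{x\in(0,1-1/M]: g_M(x)\geq1\}$ is an interval (possibly empty) lying inside a region where $g_M$ is increasing at its left end. Consequently, if $r<3(M-1)/(4M)$ and $g_M(r)<1$, then every feasible $x$ satisfies $x>r$. Moreover, $f_M$ is strictly decreasing on $(0,1)$, being a decreasing numerator over an increasing denominator. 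Therefore $k<f_M(r)$ and $t<M(1-r)$.

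Next I choose the checkpoints. For $M=36$ take $r=355/1000$. Then
\[
g_{36}(r)=0.044738875\cdot 22.22\approx0.9941<1.
\]
This gives $k<f_{36}(r)=22.22/0.126025\approx176.3$, so $k\leq182$ with room to spare. It also gives $t<36\cdot0.645=23.22<23.4$, which is $5n<117k$. For $M=35$ take $r=36/100$. Then $g_{35}(r)=0.046656\cdot21.4\approx0.9984<1$, so every feasible $x$ satisfies $x>0.36$. Strictness then gives $k<f_{35}(0.36)=21.4/0.1296\approx165.12$, hence $k\leq165$ as $k$ is an integer, and $t<35\cdot0.64=22.4$, which is $5n<112k$.

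The main obstacle is the $M=35$ case, where the margins are thin. The bound $t<22.4$ depends on $x>0.36$ being strict, and $g_{35}(0.36)$ is only about $0.0016$ below $1$. I would therefore verify these two checkpoint inequalities in exact rational arithmetic rather than decimals, e.g.\ $36^3\cdot1070<50^3\cdot 100^3/ 8$ after clearing denominators, and likewise for $M=36$. Everything else is monotonicity, already justified above.
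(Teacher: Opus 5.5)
Your proposal is correct and is essentially the paper's proof. You take $\ell=2$ in Proposition~\ref{prop:finite}, locate a rational point below the root of $x^3(M-1-Mx)=1$ on its increasing branch, and use that the bounds $M(1-x)$ and $(M-1-Mx)/x^2$ decrease in $x$. For $M=35$ you use the same point $9/25$ as the paper; for $M=36$ your checkpoint $355/1000$ only sharpens the paper's $7/20$, giving $k\leq176$. There is one slip: the cleared-denominator check for $M=35$ should read $36^3\cdot1070<50\cdot100^3$, i.e.\ $49\,921\,920<50\,000\,000$, which is equivalent to the paper's $78003/78125<1$. As you wrote it, $36^3\cdot1070<50^3\cdot100^3/8$ is true but certifies nothing. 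Your terminating-decimal products are exact, so the argument stands anyway.
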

    \begin{proof}
        Use $\ell=2$ in Proposition~\ref{prop:finite}. For $M=36$,
        \begin{equation}\label{eq:cutineq}
            x^3(35-36x)\geq1,\qquad
            k\leq\frac{35-36x}{x^2}.
        \end{equation}
        The function $x^3(35-36x)$ is increasing on $[0,7/20]$, and
        \[
            \left(\frac7{20}\right)^3
            \left(35-36\frac7{20}\right)
            =\frac{2401}{2500}<1.
        \]
        Hence $x>7/20$. The function $(35-36x)/x^2$ is decreasing on $(0,1)$, so
        \[
            k<\frac{35-36(7/20)}{(7/20)^2}
            =\frac{8960}{49}<183.
        \]
        Also $t\leq36(1-x)<117/5$, proving~\eqref{eq:182}.

        For $M=35$, the same argument uses $x>9/25$, because
        \[
            \left(\frac9{25}\right)^3\left(34-35\frac9{25}\right)
            =\frac{78003}{78125}<1.
        \]
        It gives $k<13375/81<166$ and $t<112/5$.
    \end{proof}

    The stronger cutoff for $1/35$ is used only for the optional consistency check included in the program. The $1/36$ classification alone implies all three extremal conclusions.

    \section{Proof of main results}\label{sec:certificate}
    We now describe a finite calculation proving that only three arrays can meet the threshold. Every rejection uses a necessary condition for an actual graph. No inference of graph existence is made from passing a feasibility test.

    \subsection{Eigenvalues and their multiplicities}
    For $L$ in~\eqref{eq:L}, direct expansion gives
    \begin{align}
        \chi(z)&=\det(zI-L)=(z-k)f(z),\label{eq:chi}\\
        f(z)&=z^3+cz^2+\bigl(\mu(c-k+1)-k\bigr)z-c(k-\mu).
        \label{eq:cubic}
    \end{align}
    The constant term of $f$ is nonzero, so the three nonprincipal eigenvalues are nonzero and Lemma~\ref{lem:rank} applies to each. The $(0,0)$ cofactor of $zI-L$ is
    \begin{equation}\label{eq:R}
        R(z)=(z-k+c)\bigl(z^2-\mu(k-1)\bigr)-cz(k-\mu).
    \end{equation}

    \begin{Lem}\label{lem:residue}
        For each root $\theta$ of $\chi$,
        \begin{equation}\label{eq:residue}
            m(\theta)=\frac{nR(\theta)}{\chi'(\theta)}.
        \end{equation}
        Consequently, for every irreducible factor $g\in\mathbb Q[z]$ of $f$, there is an integer $m_g\geq k$ such that
        \begin{equation}\label{eq:modtest}
            nR(z)\equiv m_g\chi'(z)\pmod{g(z)}.
        \end{equation}
    \end{Lem}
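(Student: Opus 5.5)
The plan is to derive \eqref{eq:residue} from the standard multiplicity formula for distance-regular graphs, and then to pass from a single root to a whole irreducible factor using Galois conjugacy. The first step recalls that the standard sequence $u=(u_0,u_1,u_2,u_3)$ of $\theta$ is a right eigenvector of $L$ for $\theta$ with $u_0=1$. It also recalls Biggs' formula
\[
  m(\theta)=\frac{n}{\sum_{i}k_iu_i^2}.
\]
Since $L$ is similar to the symmetric matrix $B$ via $\Delta=\operatorname{diag}(\sqrt{k_i})$, the left eigenvector of $L$ for $\theta$ is $(k_iu_i)_i$. Hence $\sum_i k_iu_i^2$ is the pairing $w^{\mathsf T}u$ of the left and right eigenvectors normalized by $u_0=1$ and $w_0=k_0u_0=1$.

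The second step applies the adjugate identity to $zI-L$. For a simple eigenvalue $\theta$, $\operatorname{adj}(\theta I-L)$ has rank one and equals $u\,w^{\mathsf T}/(w^{\mathsf T}u)$ times the scalar $\chi'(\theta)$. This is the usual fact that $\operatorname{adj}(\theta I-L)=\chi'(\theta)\,P_\theta$, where $P_\theta$ is the spectral projection. Taking the $(0,0)$ entry gives
\[
  R(\theta)=\chi'(\theta)\,\frac{u_0w_0}{w^{\mathsf T}u}=\frac{\chi'(\theta)}{\sum_ik_iu_i^2},
\]
and combining this with Biggs' formula yields $m(\theta)=nR(\theta)/\chi'(\theta)$. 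The four eigenvalues are simple, as noted after \eqref{eq:L}, so $\chi'(\theta)\neq0$ and the division is legitimate.

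For the congruence, let $g$ be an irreducible factor of $f$ over $\mathbb Q$. Its roots are among the nonprincipal eigenvalues, which are nonzero with $|\theta|<k$. By Lemma~\ref{lem:rank}, each root $\theta$ of $g$ has $m(\theta)\geq k$.

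The main step is to show that $m$ is constant on the roots of $g$. The polynomial $h(z)=nR(z)-m(\theta)\chi'(z)$ lies in $\mathbb Q[z]$, because $m(\theta)$ is an integer and $n$, $R$, $\chi'$ are rational. It vanishes at $\theta$, so the minimal polynomial $g$ of $\theta$ divides $h$. This gives \eqref{eq:modtest} with $m_g=m(\theta)$. The same divisibility shows that every conjugate root $\theta'$ satisfies $nR(\theta')=m(\theta)\chi'(\theta')$, so $m(\theta')=m(\theta)$ and $m_g$ does not depend on the chosen root. The one delicate point is justifying the adjugate/projection identity, and I would verify it directly by writing $\operatorname{adj}(zI-L)=\chi(z)(zI-L)^{-1}$ and taking the residue at the simple pole $\theta$.
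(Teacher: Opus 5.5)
Your proof is correct, but it reaches \eqref{eq:residue} by a different route from the paper. The paper does not use Biggs' formula. It observes that $B$ is the compression of $A$ to the $A$-invariant span of the normalized distance-layer vectors about $v$, and this span contains $e_v$. Hence $((zI-B)^{-1})_{00}=((zI-A)^{-1})_{vv}=\frac1n\sum_\theta m(\theta)/(z-\theta)$, because each $E_\theta$ has constant diagonal $m(\theta)/n$. Comparing this with the cofactor expression $R(z)/\chi(z)$ and taking residues gives all four multiplicities at once.

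You instead take Biggs' formula $m(\theta)=n/\sum_ik_iu_i^2$ as known. You then identify its denominator with $\chi'(\theta)/R(\theta)$ through the rank-one identity $\operatorname{adj}(\theta I-L)=\chi'(\theta)\,u w^{\mathsf T}/(w^{\mathsf T}u)$ with $w=(k_iu_i)$. This is sound: the left eigenvector follows from $B=\Delta L\Delta^{-1}$, and $w^{\mathsf T}u=\sum_ik_iu_i^2>0$. Your version shows that the lemma is a root-free repackaging of the classical formula. The cost is importing Biggs' theorem, whose usual proof rests on the same constant-diagonal property of $E_\theta$ that the paper uses directly.

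For the congruence, the paper gets equal multiplicities on a Galois orbit from the integrality of the characteristic polynomial of $A$. You derive it from \eqref{eq:residue} itself. Since $nR-m(\theta)\chi'\in\mathbb Z[z]$ vanishes at $\theta$, it is divisible by $g$. This yields \eqref{eq:modtest} directly, and it forces $m(\theta')=m(\theta)$ at every conjugate because $\chi'(\theta')\neq0$. That step is slightly more self-contained than the paper's. The bound $m_g\geq k$ then comes from Lemma~\ref{lem:rank}, exactly as in the paper.
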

    \begin{proof}
        The symmetric matrix $B$ in Section~\ref{sec:prelim} is the compression of $A$ to the normalized characteristic vectors of the distance layers about a vertex $v$. This invariant subspace contains the coordinate vector at $v$. Therefore the $(0,0)$ entry of its resolvent has the spectral expansion
        \[
            ((zI-B)^{-1})_{00}
            =\sum_\theta\frac{(E_\theta)_{vv}}{z-\theta}
            =\frac1n\sum_\theta\frac{m(\theta)}{z-\theta}.
        \]
        The diagonal entries of each spectral idempotent are constant in a distance-regular graph. Similarity between $B$ and $L$ preserves this resolvent entry, which by the cofactor formula equals $R(z)/\chi(z)$. Taking the residue at a simple root proves~\eqref{eq:residue}.

        The characteristic polynomial of the integer adjacency matrix has integer coefficients. Conjugate algebraic eigenvalues therefore occur with the same integer multiplicity $m_g$. Equation~\eqref{eq:residue}, applied to the roots of $g$, implies~\eqref{eq:modtest}; Lemma~\ref{lem:rank} gives $m_g\geq k$.
    \end{proof}

    The congruence can be checked without computing any algebraic root. Reduce $nR$ and $\chi'$ modulo $g$. Their coefficient vectors must be proportional by a positive integer at least $k$. A nonzero coefficient of the latter vector determines the only possible proportionality constant. The latter vector is not zero, because $\chi$ has simple roots. This is exact polynomial division over $\mathbb Q$.

    Factoring $f$ also requires only elementary arithmetic. A reducible monic integer cubic has an integer root. All roots of $f$ belong to $(-k,k)$, so testing the integers from $-k$ to $k$ finds such a root whenever one exists. The remaining quadratic is reducible over $\mathbb Q$ if and only if its discriminant is a square. This explains every factorization step in the publicly available primary certificate.

    \subsection{Checking the spectral bound}
    For an integer $M>0$, write $h=Mk-n$. The inequality $\sig(G)\geq1/M$ is equivalent to
    \[
        B-\left(\frac nM-k\right)I\succeq0.
    \]
    Let $D_i$ denote the determinant of the leading $i$-by-$i$ principal submatrix of $M(B-(n/M-k)I)$. Tridiagonal determinant recurrence gives
    \begin{align}
        D_1&=h,\nonumber\\
        D_2&=h^2-M^2k,\nonumber\\
        D_3&=h\bigl(D_2-M^2\mu(k-1)\bigr),\label{eq:minors}\\
        D_4&=\bigl(h+M(k-c)\bigr)D_3-M^2c(k-\mu)D_2.\nonumber
    \end{align}
    Since the off-diagonal entries of $B$ are positive, strict interlacing implies that its proper leading principal submatrices have least eigenvalue strictly greater than $\lambda_{\min}(B)$. Equivalently, positive semidefiniteness at the threshold is exactly
    \begin{equation}\label{eq:PSD}
        D_1>0,\quad D_2>0,\quad D_3>0,\quad D_4\geq0.
    \end{equation}
    For sufficiency one may also use the $LDL^{\mathsf T}$ factorization: its first three pivots are positive and its last pivot is nonnegative. The weak inequality in the last test retains equality cases.

    \subsection{Search results}
    By Proposition~\ref{prop:182}, every graph relevant to Theorem~\ref{thm:threshold} has parameters in the finite set
    \[
        2\leq k\leq182,\qquad 1\leq\mu\leq c<k.
    \]
    For each triple, the certificate checks the integrality conditions in~\eqref{eq:layers}, the bound $5n<117k$, the determinant conditions \eqref{eq:PSD} with $M=36$, and the multiplicity conditions \eqref{eq:modtest}. The complete executable program is available from the repository listed in Subsection~\ref{sec:materials}. Its output is summarized in Tables~\ref{tab:counts} and~\ref{tab:survivors}.

    \begin{table}[H]
        \small
        \linespread{1}\selectfont
        \setlength{\abovecaptionskip}{0pt}
        \setlength{\belowcaptionskip}{6pt}
        \centering
        \caption{Exact counts for parameter triples. Each rejection is assigned to the first failed test in the primary program's fixed factor order. The three rejection counts depend on this order; their sum is $1\,662$.}
        \label{tab:counts}
        \begin{tabular}{lr}
            \toprule
            Stage & Number\\
            \midrule
            All triples in the proved range & $1\,004\,731$\\
            Integer distance-layer sizes & $21\,834$\\
            Order bound $5n<117k$ & $5\,778$\\
            Spectral threshold $\sig\geq1/36$ & $1\,665$\\
            Nonconstant conjugate multiplicity: rejected & $1\,531$\\
            Nonintegral multiplicity: rejected & $130$\\
            Multiplicity less than $k$: rejected & $1$\\
            All tests passed & $3$\\
            \bottomrule
        \end{tabular}
    \end{table}

    \begin{table}[H]
        \small
        \linespread{1}\selectfont
        \setlength{\abovecaptionskip}{0pt}
        \setlength{\belowcaptionskip}{6pt}
        \centering
        \caption{All surviving intersection arrays and their spectra. Superscripts indicate multiplicities.}
        \label{tab:survivors}
        \begin{tabular}{lrl}
            \toprule
            $(k,\mu,c)$ & $n$ & Adjacency spectrum\\
            \midrule
            $(2,1,1)$ & $7$ & $\{2^{(1)},(2\cos(2\pi j/7))^{(2)}:1\leq j\leq3\}$\\
            $(4,1,2)$ & $35$ & $\{4^{(1)},2^{(14)},(-1)^{(14)},(-3)^{(6)}\}$\\
            $(7,2,3)$ & $64$ & $\{7^{(1)},3^{(21)},(-1)^{(35)},(-5)^{(7)}\}$\\
            \bottomrule
        \end{tabular}
    \end{table}

    A second program, \texttt{verify\_independent\_sympy.py}, derives the characteristic polynomial directly from the matrix $L$, tests the threshold by exact Sturm root counts, and replaces the residue test with rational trace equations for powers $0,1,2,3,5$. It gives the same $1\,665$ threshold candidates and three survivors: $1\,657$ systems are inconsistent, and five further candidates fail the multiplicity conditions. It also checks agreement between the Sturm and determinant tests for all $5\,778$ triples satisfying the order bound.

    To specify the trace test, let $g$ range over the distinct irreducible factors of $f$ and put $s_j(g)=\sum_{g(\theta)=0}\theta^j$, with $s_0(g)=\deg g$. Newton's identities compute these sums over $\mathbb Q$. There is one unknown $m_g$ for each factor, since conjugate eigenvalues of an integer matrix have equal multiplicity. The equations are
    \[
        \sum_g m_g s_j(g)=\tr(A^j)-k^j,
        \qquad j\in\{0,1,2,3,5\},
    \]
    where the right-hand sides are $n-1,-k,nk-k^2,-k^3,-k^5$, respectively. The coefficient matrix for powers $0,1,2$ has full column rank: it is the invertible Vandermonde matrix on the three distinct roots of $f$ multiplied by the matrix whose columns indicate the disjoint root sets of the factors. Thus a compatible system has a unique solution, and powers $3$ and $5$ impose additional consistency conditions.

    In particular, for $C_7$ the cubic $g(z)=z^3+z^2-2z-1$ is irreducible over $\mathbb Q$, so a single common multiplicity $m$ is required. Its five trace equations are
    \[
        m(3,-1,5,-4,-16)=(6,-2,10,-8,-32).
    \]
    They have the unique solution $m=2$, which satisfies every equation exactly and meets the bound $m\geq k=2$. The same trace procedure therefore covers irreducible cubic factors without a special case.

    The primary certificate uses only the Python standard library; the supplementary check uses SymPy~1.14.0. The second program does not import the first: the two implementations share the proved parameter range but use different algebraic tests. Both programs and their expected outputs are available from the repository listed in Subsection~\ref{sec:materials}.

    \subsection{The three graphs and the proof}\label{sec:proof}
    The three surviving arrays are realized by familiar graphs, whose parameters we recall to make the equality cases explicit.

    The cycle $C_7$ has intersection array $\{2,1,1;1,1,1\}$ and spectrum as in Table~\ref{tab:survivors}. Any connected $2$-regular graph of order seven is $C_7$.

    For $\Odd=KG(7,3)$, fix a $3$-subset $X$. The vertices in the four distance layers have intersection sizes with $X$ equal to $3,0,2,1$, respectively. Their numbers are $1,4,12,18$. Counting disjoint $3$-subsets gives the intersection array $\{4,3,3;1,1,2\}$. For example, if $Y$ meets $X$ in one point, precisely two of the four $3$-subsets of the complement of $Y$ meet $X$ in two points, and two meet $X$ in one point. The cubic in~\eqref{eq:cubic} factors as $(z-2)(z+1)(z+3)$, giving $\sig(\Odd)=1/35$.

    The vertices of $\FQ$ are the antipodal pairs in the binary $7$-cube; two pairs are adjacent when representatives differ in one coordinate. Relative to a fixed pair, distances are the minimum of a Hamming weight and its complement. The layer sizes are $1,7,21,35$, and coordinate flips give the array $\{7,6,5;1,2,3\}$. Characters of the binary cube that are constant on antipodal pairs have even weights $0,2,4,6$. Their eigenvalues are $7,3,-1,-5$, with multiplicities $1,21,35,7$. In particular, $\sig(\FQ)=2/64=1/32$.

    For uniqueness, we use the Main Theorem of Huang and Liu~\cite[p.~196]{HL1999}. It states that a connected regular graph $H$ cospectral with a generalized odd graph $\Gamma$ is distance-regular with the same intersection array as $\Gamma$; moreover, $H\cong\Gamma$ when $\Gamma$ is an odd cycle, an Odd graph, or a folded odd cube, among the families listed there. Any distance-regular realization of either of the last two arrays in Table~\ref{tab:survivors} is connected and regular, and the array determines its full spectrum through~\eqref{eq:residue}. These spectra are exactly those of $\Odd$ and $\FQ$, respectively. The cited theorem therefore gives the required isomorphisms.

    \begin{proof}[Proof of Theorem~\ref{thm:threshold}]
        Suppose $G\in\cl$ and $\sig(G)\geq1/36$. Its array has the form \eqref{eq:array}. Proposition~\ref{prop:182} puts it in the proved finite range. Equations~\eqref{eq:layers}, \eqref{eq:PSD}, and Lemma~\ref{lem:residue} show that it passes every test of the exact certificate. Hence its array is one of the three in Table~\ref{tab:survivors}. The uniqueness facts just stated identify $G$ as $C_7$, $\Odd$, or $\FQ$.

        Conversely, the preceding constructions belong to $\cl$. The values for $\Odd$ and $\FQ$ exceed $1/36$. For $C_7$, let $\alpha$ be the least root of $p(z)=z^3+z^2-2z-1$. Direct rational calculation gives
        \[
            p(-65/36)=-\frac{701}{46656}<0,\qquad
            p(-9/5)=\frac1{125}>0.
        \]
        For $z\leq-9/5$, $p'(z)=3z^2+2z-2\geq103/25>0$, so $p$ is strictly increasing there and $-65/36<\alpha<-9/5$. Consequently
        \[
            \frac1{36}<\frac{2+\alpha}{7}
            =\frac{2(1-\cos(\pi/7))}{7}<\frac1{35}.\qedhere
        \]
    \end{proof}

    \begin{proof}[Proof of Corollary~\ref{cor:extrema}]
        Theorem~\ref{thm:threshold} leaves only the three identified graphs at or above $1/36$. Their values, computed above, satisfy
        \[
            \frac1{36}<\sig(C_7)<\frac1{35}=\sig(\Odd)
            <\frac1{32}=\sig(\FQ).
        \]
        The three extremal conclusions and all equality statements follow.
    \end{proof}

    \section{Further results and discussion}\label{sec:discussion}
    The odd-girth theorem of Lee and Weng~\cite{LW2012}, with the direct proof of van Dam and Fiol~\cite{DF2012}, states that a connected graph with $d+1$ distinct adjacency eigenvalues and finite odd girth at least $2d+1$ is a distance-regular generalized odd graph of diameter $d$. No regularity assumption is needed.

    \begin{Cor}\label{cor:four}
        Let $G$ be a connected $\{C_3,C_5\}$-free graph with at most four distinct adjacency eigenvalues. Then $\sig(G)\leq1/32$, with equality if and only if $G\cong\FQ$. The threshold classification and the second and third extremal statements of Theorem~\ref{thm:threshold} and Corollary~\ref{cor:extrema} remain valid in this class.
    \end{Cor}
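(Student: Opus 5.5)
The plan is to reduce Corollary~\ref{cor:four} to Theorem~\ref{thm:threshold} and Corollary~\ref{cor:extrema} by showing that every graph in the new class with $\sig(G)>0$ lies in $\cl$. The remaining graphs will turn out to be bipartite or complete and small, and we handle them directly.

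Let $G$ be connected and $\{C_3,C_5\}$-free with $d+1\leq4$ distinct eigenvalues. If $G$ is bipartite, then $\sig(G)=0$ and there is nothing to prove. Otherwise $G$ has finite odd girth $g\geq7$. The diameter $D$ satisfies $D\leq d\leq3$, and $g\leq2D+1$, since a shortest odd cycle is isometric and hence has length at most $2D+1$. Thus $7\leq g\leq2D+1\leq2d+1\leq7$, which forces $d=D=3$ and $g=7=2d+1$. The odd-girth theorem of Lee and Weng (equivalently van Dam and Fiol) now applies. It shows that $G$ is a distance-regular generalized odd graph of diameter three.

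Such a graph is nonbipartite and has no triangles or pentagons. Note that $\{C_3,C_5\}$-freeness as subgraphs is the same as having no odd cycles of length $3$ or $5$, since an odd closed walk of length at most five contains a short odd cycle. Hence $G\in\cl$, and we may invoke Theorem~\ref{thm:threshold} and Corollary~\ref{cor:extrema} verbatim. Conversely, the three graphs $C_7$, $\Odd$ and $\FQ$ have exactly four distinct eigenvalues (Table~\ref{tab:survivors}) and are $\{C_3,C_5\}$-free, so they belong to the new class. The equality and threshold statements therefore transfer unchanged. Bipartite members contribute only the value $0<1/36$.

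The main point needing care is the inequality $g\leq 2D+1$ and $D\leq d$, both standard. The first is the isometric-cycle argument: on a shortest odd cycle of length $g$, the cycle distance between two vertices equals their graph distance, otherwise a shorter odd cycle would arise. The cycle distance attains $(g-1)/2$, so $(g-1)/2\leq D$. The second is the usual fact that $I,A,\dots,A^D$ are linearly independent, which forces at least $D+1$ distinct eigenvalues. Finally, I would check that the cited odd-girth theorem needs no regularity hypothesis, as the text states. This is what lets the result drop regularity.
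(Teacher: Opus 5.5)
Your proposal is correct and follows the paper's route: you discard bipartite graphs, apply the Lee--Weng/van Dam--Fiol odd-girth theorem to place $G$ in $\cl$, and then invoke Theorem~\ref{thm:threshold} and Corollary~\ref{cor:extrema}. The only difference is cosmetic: you force $d=3$ beforehand via $7\leq g\leq 2D+1\leq 2d+1$, whereas the paper reads $d=3$ off the theorem's conclusion that the odd girth equals $2d+1$. Your opening remark about ``complete and small'' graphs is unnecessary, since such graphs are either bipartite or contain triangles.
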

    \begin{proof}
        If $G$ is bipartite, then $\sig(G)=0$. Otherwise its odd girth is finite and at least seven. Write $d+1$ for its number of distinct eigenvalues, so $d\leq3$. The odd-girth theorem implies that $G$ is a generalized odd graph with odd girth $2d+1$. Hence $d=3$, and $G\in\cl$. Apply the main theorem.
    \end{proof}

    \begin{Cor}
        For $G\in\cl$, let $q_n(G)$ be the least eigenvalue of $Q(G)=D(G)+A(G)$. Then $q_n(G)\leq n/32$, with equality exactly for $\FQ$. After excluding $\FQ$, the sharp bound is $q_n(G)\leq n/35$, with equality exactly for $\Odd$. The third extremal value is $2(1-\cos(\pi/7))/7$ after division by $n$.
    \end{Cor}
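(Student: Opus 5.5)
The plan is to reduce the statement directly to Theorem~\ref{thm:threshold} and Corollary~\ref{cor:extrema}, using the elementary identity between the least signless Laplacian eigenvalue and $\lambda_1+\lambda_n$ for regular graphs. Every $G\in\cl$ is distance-regular, hence $k$-regular, so $Q(G)=kI+A(G)$. The spectrum of $Q(G)$ is therefore obtained by shifting the adjacency spectrum by $k$. This gives
\[
    q_n(G)=k+\lambda_n(G)=\lambda_1(G)+\lambda_n(G)=n\,\sig(G),
\]
where we use $\lambda_1(G)=k$ for a connected $k$-regular graph. Consequently $q_n(G)/n=\sig(G)$ for every $G\in\cl$.

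With this identity, each assertion transfers verbatim. First, Corollary~\ref{cor:extrema}(1) gives $\sig(G)\le 1/32$, with equality exactly for $\FQ$, so $q_n(G)\le n/32$, with equality exactly for $\FQ$. Second, part~(2) of the same corollary gives $q_n(G)\le n/35$ for $G\not\cong\FQ$, with equality exactly for $\Odd$. As a sanity check, $q_n(\Odd)=4-3=1=35/35$ and $q_n(\FQ)=7-5=2=64/32$, which agrees with Table~\ref{tab:survivors}. Third, part~(3) identifies the third extremal value: after excluding $\FQ$ and $\Odd$, the sharp bound is $q_n(G)/n\le\sig(C_7)=2(1-\cos(\pi/7))/7$, with equality only for $C_7$. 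Here $q_n(C_7)=2+2\cos(6\pi/7)=2-2\cos(\pi/7)$.

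There is no genuine obstacle. The only point to state carefully is that regularity (and connectedness, which gives $\lambda_1=k$) holds throughout $\cl$, so the shift identity is exact and not merely an inequality. I would also remark that the sharpness wording for the second and third values means these values are attained by $\Odd$ and $C_7$ respectively. This is already contained in the equality clauses of Corollary~\ref{cor:extrema}.
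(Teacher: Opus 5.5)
Your proposal is correct and follows the paper's proof: regularity gives $Q=kI+A$, hence $q_n(G)=k+\lambda_n(G)=n\,\sig(G)$, so the statement is exactly Corollary~\ref{cor:extrema}. Your spot checks for $\Odd$, $\FQ$ and $C_7$ also agree with Table~\ref{tab:survivors}.
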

    \begin{proof}
        Distance-regular graphs are regular. Thus $Q=kI+A$ and $q_n(G)=k+\lambda_n(G)$, so the result is precisely Corollary~\ref{cor:extrema}.
    \end{proof}

    These are direct consequences of the main classification and established spectral results.

    \subsection{Related work and open questions}
    Table~\ref{tab:comparison} locates the result relative to the pertinent literature. The comparison distinguishes a sharp result in a structured class from bounds valid for all graphs with the same forbidden cycles.

    \begin{table}[H]
        \small
        \linespread{1}\selectfont
        \setlength{\abovecaptionskip}{0pt}
        \setlength{\belowcaptionskip}{6pt}
        \caption{Direct antecedents, related results, and the contribution of the present classification.}
        \label{tab:comparison}
        \begin{tabularx}{\textwidth}{@{}>{\raggedright\arraybackslash}p{33mm}>{\raggedright\arraybackslash}X>{\raggedright\arraybackslash}X@{}}
            \toprule
            Work & Scope and method & Relation to this paper\\
            \midrule
            Terwilliger (1982)~\cite{Ter1982}; Juri\v{s}i\'{c}--Koolen--Miklavi\v{c} (2005)~\cite{JKM2005} & Tree bounds and neighborhood Gram matrices give the established local multiplicity bound. & This bound supplies the multiplicity constraint used in the explicit valency reduction.\\[4pt]
            Abiad--Taranchuk--van Veluw (2026)~\cite{ATV2026} & Lemma~6 gives the odd-moment inequality; weighted interlacing gives $\sig<0.0396$ for all $\{C_3,C_5\}$-free graphs. & Combining their moment bound with multiplicities yields a finite search answering Problem~11 and determining two further extrema.\\[4pt]
            Yip (2026)~\cite{Yip2026} & High odd girth tending to infinity; Chebyshev-polynomial and approximation methods give $O(g^{-3}(\log g)^3)$. & The present theorem determines exact constants at fixed odd girth seven and diameter three.\\[4pt]
            Qiao--Koolen (2018)~\cite{QK2018} & Valency bounds and classifications under assumptions on the least eigenvalue relative to the valency. & A threshold for $\sig$ gives the explicit cutoff $182$ and a short arithmetic exhaustion.\\[4pt]
            Lang--Terwilliger (2007)~\cite{LT2007} & Almost-bipartite distance-regular graphs with the $Q$-polynomial property. & No $Q$-polynomial assumption or such classification is used.\\[4pt]
            Abiad--Kumar--Pragada (2026 preprint)~\cite{AKP2026} & Forbidden-clique spectral bipartiteness bounds via localized independent set inequalities. & Different constraints; the preprint supplies context, not an exact diameter-three extremal theorem.\\
            \bottomrule
        \end{tabularx}
    \end{table}

    The new conclusion is the exhaustive exclusion of every competing intersection array above $1/36$, including possible arrays outside existing classifications. The multiplicity and moment bounds, the three graph constructions, and their uniqueness are established ingredients. The cutoff and exact exclusion turn them into a complete extremal classification.

    The proof is computer-assisted. Proposition~\ref{prop:182} proves the finite range analytically, and the exact enumeration in Section~\ref{sec:certificate} excludes every other array in that range. The constructions and Huang--Liu theorem then establish existence and uniqueness. The second program cross-checks the finite exclusion; it is not needed for the implication from the primary certificate to Theorem~\ref{thm:threshold}. The versioned repository listed below makes the calculation reproducible.

    The sharp bound for all $\{C_3,C_5\}$-free regular graphs remains open. Uniform independent blow-ups of $\FQ$ preserve $\sig=1/32$ and odd girth seven, but generally leave the distance-regular class. Extending the theorem beyond four distinct eigenvalues requires additional structure; the present result also does not give an edit-distance stability theorem or a sharp edge-deletion bound for bipartiteness.

    The remaining natural questions are whether the finite parameter exclusion admits a substantially shorter symbolic proof, and how far the same threshold method can be pushed at larger diameter. Proposition \ref{prop:finite} supplies a finite reduction for any fixed positive target, but the number of intersection parameters then increases.

    \subsection{Programs and data}\label{sec:materials}
    The complete computational materials are publicly available at \url{https://github.com/zhouqi-math/generalized-odd-graph-spectral-extrema}. The repository contains the primary exact certificate, the independent Sturm-and-trace verification, the graph-construction checks, execution instructions, and all expected JSON reports and candidate records. All computational outputs were independently verified by the author following the validation procedures described herein.

%%%%%%%%%%%%%%%%%%%%%%%%%%%%%%%%%%%%%%%%%%%%%
\subsection*{Acknowledgements}
%%%%%%%%%%%%%%%%%%%%%%%%%%%%%%%%%%%%%%%%%%%%%
The author is supported by the National Natural Science Foundation of China (Nos. 12401442).

\subsection*{Declaration of AI use}

The author acknowledges the use of ChatGPT (GPT-5.6, OpenAI) as an auxiliary tool for literature retrieval, language refinement, and programming.  AI tools were not used to draft the manuscript. The author retains full responsibility for the content, proofs, and conclusions of the final manuscript.

\end{document}